\newtheorem{theorem}{Theorem}
\newtheorem{remark}[theorem]{Remark}
\newtheorem{definition}[theorem]{Definition}
\newcommand{\be}{\begin{equation}}
\newcommand{\ee}{\end{equation}}
\newcommand{\bes}{\begin{equation*}}
\newcommand{\ees}{\end{equation*}}
\newcommand{\beqa}{\begin{eqnarray*}}
\newcommand{\eeqa}{\end{eqnarray*}}
\newcommand{\R}{\mathbb R}
\DeclareRobustCommand{\qed}{%
	\ifmmode % if math mode, assume display: omit penalty etc.
	\else \leavevmode\unskip\penalty9999 \hbox{}\nobreak\hfill
	\fi
	\quad\hbox{\qedsymbol}}
\newcommand{\openbox}{\leavevmode
	\hbox to.77778em{%
		\hfil\vrule
		\vbox to.675em{\hrule width.6em\vfil\hrule}%
		\vrule\hfil}}
\newcommand{\qedsymbol}{\openbox}
\newenvironment{proof}[1][\proofname]{\par
	\normalfont
	\topsep6\p@\@plus6\p@ \trivlist
	\item[\hskip\labelsep\itshape
	#1.]\ignorespaces
}{%
	\qed\endtrivlist
}
\newcommand{\proofname}{Proof}
\begin{document}

%\begin{frontmatter}

%\title
\begin{center}
{\Large\bf Rational approximation and its application to improving deep learning classifiers}

%\author[FSET]
{V.~Peiris}, 
%\ead{vpeiris@swin.edu.au}
%\author[TelAviv]
{N.~Sharon},
%\ead{nsharon@tauex.tau.ac.il}
%\author[FSET]
{N.~Sukhorukova}
%\ead{nsukhorukova@swin.edu.au}
%\author[Deakin]
{J. Ugon}
%\ead{julien.ugon@deakin.edu.au}
\end{center}
%
%\address[FSET]{Faculty of Science, Engineering and Technology, Swinburne University of Technology, PO Box 218, Hawthorne, Victoria, Australia}
%
%\cortext[cor1]{Corresponding author}
%
%\address[TelAviv]{Tel-Aviv University, Ramat-Aviv, Tel-Aviv, 69978, Israel} 
%
%\address[Deakin]{221 Burwood Hwy, Burwood, VIC 3125, Australia} 

\begin{abstract}
A rational approximation by a ratio of polynomial functions is a flexible alternative to polynomial approximation. In particular, rational functions exhibit accurate estimations to nonsmooth and non-Lipschitz functions, where polynomial approximations are not efficient.  We prove that the optimisation problems appearing in the best uniform rational approximation are quasiconvex, and show how to use this fact for calculating the best approximation in a fast and efficient method. The paper presents a theoretical study of the arising optimisation problems and provides results of several numerical experiments. In all our computations, the algorithms terminated at optimal solutions. We apply our approximation as a preprocess step to deep learning classifiers and demonstrate that the classification accuracy is significantly improved compared to the classification of the raw signals.
\end{abstract}

%\begin{keyword}

{\bf Key words:} Rational approximation,  Quasiconvex functions,  Chebyshev approximation,  Data analysis, Classification,  Deep learning

{\bf MSC[2010]:} 90C25, % Convex programming
90C90,              % Applications of mathematical programming
65D15,              % Algorithms for functional approximation
68T99,              % Artificial intelligence, general
65K10,   % ??  % Optimization and variational techniques

%\end{keyword}

%\end{frontmatter}

%\linenumbers      % if needed

\section{Introduction}\label{seq:intro}

Approximation by rational functions in Chebyshev norm (uniform approximation) attracts a considerable amount of research attention. One of the reasons for this is a large number of potential real-life applications. Another reason  is the  ability of rational functions to approximate abruptly changing functions efficiently. Rational function approximation can be regarded as an extension of well studied polynomial approximation~\cite{chebyshev, remez57, Remez69, NUG,Trefethen2012} just to name a few.  In particular, polynomials are simple  and easy to handle.  At the same time, polynomial approximations are not  efficient when one needs to approximate nonsmooth or non-Lipschitz  functions: high order polynomials are required, leading to severe oscillations and numerical instability.

One way to overcome this problem is to use piecewise polynomials or polynomial splines~\cite{NUG, NR, Schumaker1968, sukhorukovaoptimalityfixed}. This approach   is efficient when the location of the knots (that is, the location of switching from one polynomial to the next one) is known. In this case, the optimisation problem is convex and there exists a number of theoretical and  numerical techniques to deal with this problem. When the knots are free, the problem is much more complex~\cite{NUG, NR,Meinardus1989, Mulansky92, su10,SukUgonTrans2017, SukUgonCrouzeix2019} and therefore the computational ability of modern optimisation techniques to handle this problem is limited.

Rational approximation by a ratio of polynomials is a promising alternative to free knot spline approximation: the denominator polynomial allows one to produce accurate approximations to nonsmooth and non-Lipschitz functions. In particular, past studies shows that in the case of functions in $L_p~(1\leq p<\infty )$ the rational approximation and free-knot spline approximations are closely related, see~\cite[Chapter 8]{PetrushevPopov1987} and~\cite[Chapter 10]{lorentz1996constructive}.

Uniform approximation by rational functions was extensively studied in the 50s-60s of the twentieth century~\cite{Achieser1965, Rivlin1962, Boehm1964,Meinardus1967rational,Ralston1965Reme}.  This was mostly the period of theoretical research and the attempts to extend the application of the celebrated Remez method, originally developed for polynomial approximation~\cite{remez57,Remez69,Remez34}, to rational approximation.  The most recent developments in this area~\cite{Trefethen2018}  are dedicated to ``nearly optimal'' solutions, whose construction is based on Chebyshev polynomials.

In this paper we propose a new approach by looking at the problem from the point of view of optimisation. Namely, the optimisation problem is quasiconvex. This class of functions includes all the functions whose sublevel set is convex. In particular, any convex function is also quasiconvex.  There are a number of  theoretical studies dedicated to quasiconvex optimisation~\cite{DaniilidisHadjisavvasMartinezLegas2002, rubinov2013abstract, dutta2005abstract, SL}. The development of efficient  computational techniques is not  extensively studied. In this paper we propose a simple, but still efficient computational method that converges to a global minimum of the objective function. Our algorithm uses the fact that the problem is  quasiconvex and therefore it explores the structure of the objective function rather than assigning it to a class of general nonconvex functions.

The proposed approach has been tested on a number of functions, including nonsmooth and non-Lipschitz. Then this method was tested in application to deep learning. In particular, the raw data were substituted by the parameters of carefully constructed   approximation before training the networks. This approach enhances the classification accuracy. A similar approach was proposed in~\cite{Zamir2015, zamir2013optimization, Zamir2016}, where the authors substitute the raw data by the coefficients of piecewise polynomial based approximations and then apply a number of machine learning methods to test the classification improvement. The main drawback of these studies is the  assumption  that equidistant fixed knots approach is accurate and  therefore  convex models are efficient, while in the current study we use rational functions that may be considered as a suitable alternative to free knot approximation~\cite{PetrushevPopov1987}. Therefore, this study can be viewed as a step forward extension to~\cite{Zamir2015, zamir2013optimization, Zamir2016}.

This paper is organised as follows. Section~\ref{sec:motivation} provides an overall description of the problem and motivation.
In section~\ref{sec:formulation} we provide a mathematical formulation and extensive analysis of the problem as well as the results of numerical experiments with regards to the optimality of the obtained results.  Then, in  section~\ref{sec:application} we develop an optimisation-based  model to improve the classification accuracy in signal classification, using a number of deep learning models. The results are very promising and encouraging. Section~\ref{sec:conclusions} provides
the conclusions and identifies our further research directions.

\section{Motivation}\label{sec:motivation}

The goal of this paper is to develop a simple and efficient method for continuous function approximation by rational functions in Chebyshev norm. The choice of rational functions is based on a number of essential properties. 
\begin{enumerate}
\item Rational functions are simple to work with and much more flexible than classical and trigonometric polynomials when nonsmooth and non-Lipschitz functions are subject to approximation. 
\item Results from~\cite{PetrushevPopov1987, lorentz1996constructive} indicate that the approximation power of rational functions is equivalent to free knot polynomial spline approximation whose underlying optimisation problems are very complex and there is no efficient  computational tool for constructing optimal approximations~\cite{ NUG, FreeKnotsOpenProblem96}.  In particular, the problem of free knot approximation was listed as one of the most important open problem in approximation~\cite{FreeKnotsOpenProblem96}.
\item There are many potential applications for this study in data analysis, where raw data are approximated by suitable functions to enhance the efficiency~\cite{Zamir2015, SharonShkolnisky}.   
\end{enumerate}

A theoretical study of the optimisation problems appearing in this research is presented in section~\ref{sec:formulation}. In particular, it is demonstrated that the corresponding optimisation problem is  quasiconvex. The class of quasiconvex functions is rich, but  there are a number of efficient  tools and computational approaches to deal with these problems. This is not the case for general nonconvex functions (for example, approximation by free knot splines) and therefore this extra knowledge about the structure of the problem is very beneficial and valuable. 

This research has many potential applications. Some of them are quite theoretical~\cite{SharonShkolnisky}, aiming at evaluation non-analytic matrix functions, while some others~\cite{Zamir2015} are very applied, in particular, in the area of data analysis. It was demonstrated in~\cite{Zamir2015,Zamir2016sa} that the application of available machine learning methods to the parameters of  carefully designed approximations rather than raw signals significantly increase the classification accuracy. This observation motivated us to develop rational function-based models as a suitable alternative to free knot splines. Section~\ref{sec:application} studies this important practical application in details. 

\section{Constructing rational approximation} \label{sec:formulation} 

\subsection{Problem formulation}
Let $I$ be a closed segment on the real line and denote by $f(t) \in C^0\left(I\right)$ a function to be approximated. Let $\Pi_n$ be the space of polynomials of total degree $n$ and $\mathcal{R}_{n,m}$ be the set of all rational
functions of degrees $n$ and $m$, i.e.,
\[ \mathcal{R}_{n,m} = \{ p/q \, \mid \, p \in \Pi_n, \quad q \in \Pi_n \} .\]
Functions from $\mathcal{R}_{n,m}$ are also known as rational function of type $(n,m)$. The uniform rational best approximation problem over $I$, also known as the \textit{minimax} rational approximation, is defined as 
\begin{equation} \label{eqn:best_rational_app}
 \min_{r \in \mathcal{R}_{n,m}} \max_{x \in I} \abs{f(x) - r(x)} . 
\end{equation}
Classically, the problem is addressed by the Remez algorithm, based on the equioscillation characterization of the best approximation, see e.g.,~\cite[Chapter 13]{ralston1960mathematical}. Recently, rational approximation and best rational approximation have gained attention due to several improved algorithms and novel applications, see e.g.,~\cite{Filip2018, nakatsukasa2016computing}.  We obtain a solution to the best rational approximation via optimisation techniques, as described next.

Our optimisation problem reads:
\begin{equation}\label{eq:problem}
\min_{{\bf A}, \bf{B}}\sup_{t\in [c,d]}\left|f(t)-\frac{\bf{A}^T\bf{G(t)}}{\bf{B}^T\bf{H(t)}}\right|,
\end{equation}
subject to
\begin{equation}\label{eq:positivity}
\bf{B}^T\bf{H(t)}>0,~t\in I,
\end{equation}
where 
${\bf{A}}=(a_0,a_1,\dots,a_n)^T\in\R^{n+1}, {\bf{B}}=(b_0,b_1,\dots,b_m)^T\in\R^{m+1}$ are our decision variables, and ${\bf{G}}(t)=(g_0(t),\dots,g_n(t))^T$, ${\bf{H}}(t)=(h_0(t),\dots,h_m(t))^T$, where $g_j(t)$, $j=1,\dots,n$ and $h_i(t)$, $i=1,\dots, m$ are known functions. In the rest of the paper we refer to $g_j(t)$ and $h_i(t)$ as basis functions. Therefore, we construct the approximations in the form of the ratio of linear combinations of basis functions. Note that the constraint set is an open convex set.

In the case when all the basis functions are monomials, the approximations are rational functions from $\mathcal{R}_{n,m}$, and the problem is reduced to the uniform best rational approximation~\eqref{eqn:best_rational_app}. In this paper,  we are not restricted to rational functions: all the results are valid for any types of basis functions when~(\ref{eq:positivity}) is satisfied. For simplicity, we call these approximations rational approximations even when the numerator and/or denominator are not polynomials.

\begin{definition}\cite{SL,sion1958,JPCrouzeix1980quasi}
Function $ f(t)$ is quasiconvex if and only if its sublevel set 
$$ S_{\alpha}=\{x |f (x ) \leq \alpha \} $$
is  convex for any $\alpha\in\R$. 
\end{definition}
This definition is equivalent to the following one. 
\begin{definition}
A function $ f : D \rightarrow \R $ defined on a convex subset $D $ of a real vector space is called  quasiconvex if and only if  for any pair $x$ and $y$ from $D$  and $\lambda\in [ 0 , 1 ]$ one has 
$$ 
f (\lambda x + ( 1-\lambda ) y )
 \leq
 \max\{ f ( x ) , f ( y )\} .$$ 
\end{definition}
\begin{definition}
Function $f$ is quasiconcave if and only if $-f$ is quasiconvex.
\end{definition}
\begin{definition}
Functions that are quasiconvex and quasiconcave at the same time are called quasiaffine (sometimes quasilinear).
\end{definition}

\begin{theorem}
The objective function~(\ref{eq:problem}) is quasiconvex.
\end{theorem}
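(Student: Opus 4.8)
The plan is to combine two standard structural facts: that a pointwise supremum of quasiconvex functions is quasiconvex, and that a ratio of an affine function over a strictly positive affine function is quasiaffine. First I would fix $t\in[c,d]$ and isolate the inner map
$$\phi_t(\mathbf{A},\mathbf{B})=\left| f(t)-\frac{\mathbf{A}^T\mathbf{G}(t)}{\mathbf{B}^T\mathbf{H}(t)}\right|$$
on the domain $\mathcal{D}=\{(\mathbf{A},\mathbf{B}) : \mathbf{B}^T\mathbf{H}(t)>0 \text{ for all } t\in I\}$, which is convex (an intersection of open half-spaces, as already noted in the text). Since the objective equals $\sup_{t\in[c,d]}\phi_t$, its sublevel set is the intersection $\bigcap_{t}S_\alpha(\phi_t)$ of the per-$t$ sublevel sets. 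An intersection of convex sets is convex, so it suffices to prove that each $S_\alpha(\phi_t)$ is convex; quasiconvexity of the objective then follows from the sublevel-set definition of quasiconvexity.

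For the per-$t$ step I would write $u(\mathbf{A})=\mathbf{A}^T\mathbf{G}(t)$ and $v(\mathbf{B})=\mathbf{B}^T\mathbf{H}(t)$, both affine with $v>0$ on $\mathcal{D}$, and set the constant $c=f(t)$. Clearing the denominator gives $\phi_t=|cv-u|/v$, where $L(\mathbf{A},\mathbf{B})=c\,v(\mathbf{B})-u(\mathbf{A})$ is affine. For $\alpha<0$ the sublevel set is empty, hence convex. For $\alpha\geq 0$, positivity of $v$ lets me multiply through and split the absolute value, obtaining
$$\{\phi_t\leq\alpha\}=\{\,L-\alpha v\leq 0\,\}\cap\{\,-L-\alpha v\leq 0\,\}.$$
Each constraint is affine in $(\mathbf{A},\mathbf{B})$, so the sublevel set is the intersection of two half-spaces with the convex domain $\mathcal{D}$, and is therefore convex. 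This establishes quasiconvexity of $\phi_t$, and the supremum rule above finishes the argument.

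The one point that genuinely demands care, and which I would flag explicitly, is the outer absolute value: the absolute value of a merely quasiconvex function need not be quasiconvex. What rescues the argument is that the inner linear-fractional function $f(t)-u/v$ is quasiaffine, i.e.\ simultaneously quasiconvex and quasiconcave, so that both $\{L-\alpha v\leq 0\}$ and $\{-L-\alpha v\leq 0\}$ are convex. The explicit two-inequality computation above encodes exactly this, and I would present it in that form rather than invoking quasiaffinity abstractly, since the direct calculation is self-contained and avoids any appeal to sign conventions for the denominator. Everything else — convexity of the domain and the behaviour of suprema of quasiconvex functions — is routine, so the absolute-value handling is the only place where the structure of the problem, as opposed to generic manipulation, is essential.
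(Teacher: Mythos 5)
Your proof is correct and takes essentially the same route as the paper's: both arguments decompose the objective as a supremum (over $t$ and over the two signs coming from $|w|=\max\{w,-w\}$) of quasiaffine linear-fractional functions and then apply the rule that a supremum of quasiconvex functions is quasiconvex. The only difference is presentational --- you verify the quasiaffinity of the linear-fractional term and the supremum rule directly, via explicit sublevel-set and half-space computations, whereas the paper cites both facts from~\cite{SL}.
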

\begin{proof}
To complete the proof, we need the following two results, refer to~\cite{SL} for details.
\begin{enumerate}
\item The supremum of a family of quasiconvex functions is quasiconvex.
\item The ratio of two linear functions is quasiaffine.
\end{enumerate}

Note, first of all, that 
$$
v({\bf A},{\bf B})=\frac{\bf{A}^T\bf{G(t)}}{\bf{B}^T\bf{H(t)}}
$$
is the ratio of two linear functions (with respect to the polynomial coefficients) and therefore it is quasiaffine~\cite{SL}. Since 
$$
|w|=\max\{w,-w\},
$$
one can see that the objective function in~(\ref{eq:problem}) is the supremum of a family of quasiaffine (and therefore quasiconvex) functions. This completes the proof.
\end{proof}
\begin{remark}
 Note that the sum of two quasiconvex functions is not always quasiconvex. As a result, the least squares approximation approach leads to a complex optimisation problem.  Unlike polynomial and fixed knot spline approximation, there is no closed form solution for least squares approximation in the case of approximation by a ratio of linear combinations of basis functions, see e.g.,~\cite{hokanson2018least}.
\end{remark}

In some cases, the degree of the denominator and/or numerator can be reduced without loosing the accuracy, then one has
$$\frac{\sum_{j=0}^n a_jt_i^j}{\sum_{k=0}^{m}b_kt_i^k}=\frac{\sum_{j=0}^{n-\nu} a_jt_i^j}{\sum_{k=0}^{m-\mu}b_kt_i^k},$$
where $d=\min\{\nu,\mu\}$  is termed \textit{the defect} or the measure of degeneracy. Then, one can formulate the following necessary and sufficient optimality conditions~\cite{Achieser1965}.
\begin{theorem} \label{thm:equioscillation_characterization}
A rational function in $\mathcal{R}_{n,m}$ with defect $d$ is the best polynomial approximation of a function $f \in C^0(I)$ if and only if there exists a sequence of at least $n+m+2-d$ points of maximal deviation where the sign of the deviation at these points alternates.
\end{theorem}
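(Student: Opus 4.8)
The plan is to follow the classical Chebyshev--de la Vallée-Poussin equioscillation argument, linearising the comparison between a candidate best approximation $r^\ast=p^\ast/q^\ast$ (written in lowest terms) and an arbitrary competitor $r=p/q$ in $\mathcal R_{n,m}$, and exploiting that every admissible denominator is strictly positive on $I$ by~\eqref{eq:positivity}. The key identity is
$$r-r^\ast=\frac{p\,q^\ast-p^\ast q}{q\,q^\ast},$$
so that, since $q\,q^\ast>0$ on $I$, the sign of $r-r^\ast$ coincides pointwise with the sign of the polynomial $P:=p\,q^\ast-p^\ast q$. Writing the reduced degrees as $\deg p^\ast = n-\nu$ and $\deg q^\ast = m-\mu$, a degree count gives $\deg P\le \max\{n+m-\mu,\,n+m-\nu\}=n+m-\min\{\nu,\mu\}=n+m-d$. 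Hence $P$ has at most $n+m-d$ sign changes on $I$ unless $P\equiv 0$, in which case $r=r^\ast$.

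For sufficiency I would assume the error $e^\ast:=f-r^\ast$ equioscillates at points $x_0<\dots<x_{n+m+1-d}$, that is $e^\ast(x_i)=(-1)^i\sigma\,\|e^\ast\|_\infty$ for a fixed sign $\sigma\in\{\pm1\}$, and argue by contradiction. If some $r$ satisfied $\|f-r\|_\infty<\|e^\ast\|_\infty$, then $|(f-r)(x_i)|<|e^\ast(x_i)|$ at every node, forcing $r(x_i)-r^\ast(x_i)=e^\ast(x_i)-(f-r)(x_i)$ to inherit the sign $(-1)^i\sigma$. Thus $P$ alternates sign across the $n+m+2-d$ nodes, producing at least $n+m+1-d$ interior zeros and contradicting $\deg P\le n+m-d$. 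Therefore no improving $r$ exists and $r^\ast$ is best.

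For necessity I would prove the contrapositive: if $e^\ast$ admits at most $n+m+1-d$ alternation points, then $r^\ast$ is not optimal. The engine is the family of numerators obtained by perturbing $p^\ast\mapsto p^\ast+\varepsilon u$, $q^\ast\mapsto q^\ast+\varepsilon w$ with $u\in\Pi_n$, $w\in\Pi_m$; to first order the correction to $r^\ast$ has numerator $N:=u\,q^\ast-p^\ast w$. Using that $p^\ast,q^\ast$ are coprime, a rank--nullity computation shows that the map $(u,w)\mapsto N$ has kernel $\{(p^\ast c,\,q^\ast c):c\in\Pi_d\}$ of dimension $d+1$, so its image has dimension $(n+m+2)-(d+1)=n+m+1-d$; since this image lies in $\Pi_{n+m-d}$, a space of the same dimension, the image is exactly $\Pi_{n+m-d}$---a space in which every nonzero element, being a polynomial of degree at most $n+m-d$, has at most $n+m-d$ zeros. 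I would then pick $N\in\Pi_{n+m-d}$ vanishing at the $\le n+m-d$ points separating the sign-blocks of the maximal deviation and carrying the sign of $e^\ast$ on each block; the step $r_\varepsilon=r^\ast+\varepsilon\,N/(q^\ast)^2$ for small $\varepsilon>0$ then strictly lowers $|f-r_\varepsilon|$ at every maximal point, and a compactness argument converts this into a genuine reduction of $\|f-r_\varepsilon\|_\infty$.

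The main obstacle, and the step I would treat most carefully, is the necessity direction. Two points demand attention: first, the exact defect-dependent dimension count that identifies the admissible perturbation numerators with all of $\Pi_{n+m-d}$---this is precisely where coprimality of $p^\ast,q^\ast$ and the reduced degrees $n-\nu$, $m-\mu$ enter, and it is what makes the dimension tight rather than a mere inequality; and second, upgrading the infinitesimal improvement to a true one while respecting the open constraint~\eqref{eq:positivity}. The latter is comparatively benign, since $q^\ast>0$ on the compact set $I$ keeps the perturbed denominator positive for small $\varepsilon$, but the uniform (rather than pointwise-at-extrema) decrease requires the standard continuity argument on the set where $|e^\ast|$ is near its maximum. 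Balancing the algebraic realisability of the improving direction against preservation of denominator positivity is where the technical weight of the proof lies.
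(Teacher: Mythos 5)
The paper itself offers no proof of this theorem: it is stated (with a typo --- ``best polynomial approximation'' should read ``best rational approximation'') as a classical result quoted from Achieser~\cite{Achieser1965}, so there is no internal argument to compare yours against. Judged on its own merits, your reconstruction is the standard equioscillation proof and is essentially correct. The sufficiency half is the de la Vall\'ee Poussin count: positivity of the denominators gives $\mathrm{sign}(r-r^\ast)=\mathrm{sign}(pq^\ast-p^\ast q)$ pointwise, alternation at $n+m+2-d$ points forces $n+m+1-d$ sign changes of the polynomial $P=pq^\ast-p^\ast q$, and your degree bound $\deg P\le n+m-d$ (which correctly uses the reduced degrees of $p^\ast,q^\ast$, not of the competitor) yields $P\equiv 0$, i.e.\ $r=r^\ast$. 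The necessity half hinges on your dimension count, which is also right: coprimality of $p^\ast,q^\ast$ gives kernel $\{(p^\ast c,q^\ast c):c\in\Pi_d\}$ for the map $(u,w)\mapsto uq^\ast-p^\ast w$, so its image is a subspace of $\Pi_{n+m-d}$ of full dimension $n+m+1-d$, hence all of $\Pi_{n+m-d}$; this is exactly what lets you realise the block-separating sign polynomial as an admissible perturbation direction. Two points to tighten. First, the improving competitor must be the genuine element $r_\varepsilon=(p^\ast+\varepsilon u)/(q^\ast+\varepsilon w)\in\mathcal{R}_{n,m}$, whose exact deviation is $r_\varepsilon-r^\ast=\varepsilon N/\bigl((q^\ast+\varepsilon w)q^\ast\bigr)$; your shorthand $r^\ast+\varepsilon N/(q^\ast)^2$ is generally \emph{not} in $\mathcal{R}_{n,m}$ and cannot itself serve as the contradiction witness, though since the exact correction has the same sign as $N$ for small $\varepsilon$, the argument survives once rephrased. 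Second, the degenerate case $p^\ast\equiv 0$ (where coprimality and the defect convention need separate treatment) is left unaddressed; it is a standard but necessary footnote in the classical proof.
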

%Note that when $d\neq 0$, many computational methods experience difficulties and instabilities. 
Since it is not easy to know the defect, the total number of alternating points for the optimal solution may not be a reliable stopping criterion. Nevertheless, for $d=0$ the number of alternating points is maximal and one can use this condition as a sufficient condition for validating optimality.

\subsection{Algorithm}
In our experiments, we discretise the approximation interval:
$$t_1,\dots,t_N\in I ,~N\gg \max\{m,n\}.$$ Then the problem is 
\begin{equation}\label{eq:obj}
\min z
\end{equation}
subject to
%\begin{equation}\label{eq:con1}
%f(t_i)-\frac{{\bf{A}}^T{\bf{G}}(t_i)}{{\bf{B}}^T{\bf{H}}(t_i)}\leq z,
%\end{equation}
%\begin{equation}\label{eq:con2}
%\frac{{\bf{A}}^T{\bf{G}}(t_i)}{{\bf{B}}^T{\bf{H}}(t_i)}-f(t_i)\leq z,
%\end{equation}
%\begin{equation}\label{eq:con3}
%{\bf{B}}^T{\bf{H}}(t_i)>0,
%\end{equation}
%\begin{equation}\label{eq:con4}
%t_i\in I , ~i=1,\dots,N.
%\end{equation}
\begin{align}
f(t_i)-\frac{{\bf{A}}^T{\bf{G}}(t_i)}{{\bf{B}}^T{\bf{H}}(t_i)}\leq z, \quad i=1,\dots,N \label{eq:con1} \\
\frac{{\bf{A}}^T{\bf{G}}(t_i)}{{\bf{B}}^T{\bf{H}}(t_i)}-f(t_i)\leq z, \quad i=1,\dots,N  \label{eq:con2} \\
{\bf{B}}^T{\bf{H}}(t_i)>0,\quad i=1,\dots,N  \label{eq:con3} 
\end{align}

Note that if $({\bf A},{\bf B})$ is an optimal solution, 
then $\alpha({\bf A},{\bf B}),~\alpha
\in 
\R
$ is also an optimal solution, since these two solutions represent the same fraction. One way to  avoid this ambiguity, is to fix one of the variables to~$1$ or $-1$ (the one that makes the denominator  ${\bf{B}}^T{\bf{H}}(t_i)$ positive). For computational purposes, one can substitute~(\ref{eq:con3}) by ${\bf{B}}^T{\bf H}(t_i)\geq \delta$, where $\delta$ is a small positive number. 

When $z$ is a variable, problem~(\ref{eq:obj})-(\ref{eq:con3}) is non-convex. If $z$ is fixed and the feasible set~(\ref{eq:con1})-(\ref{eq:con3}) has a feasible point, then the optimal solution is bounded by $z$. This feasibility can be verified by solving the following linear programming problem.
\begin{equation}\label{eq:obja}
\min \theta
\end{equation}
subject to
\begin{align}
(f(t_i)-z){{\bf{B}}^T{\bf{H}}(t_i)}-{{\bf{A}}^T{\bf{G}}(t_i)}\leq \theta,
\quad i=1,\dots,N  \label{eq:con1a} \\
{{\bf{A}}^T{\bf{G}}(t_i)}-(f(t_i)+z){{\bf{B}}^T{\bf{H}}(t_i)}\leq \theta, \quad i=1,\dots,N  \label{eq:con2a} \\
{\bf{B}}^T{\bf{H}}(t_i)\geq\delta,
\quad i=1,\dots,N  \label{eq:con3a} 
\end{align}
%\begin{equation}\label{eq:con1a}
%(f(t_i)-z){{\bf{B}}^T{\bf{H}}(t_i)}-{{\bf{A}}^T{\bf{G}}(t_i)}\leq \theta,
%\end{equation}
%\begin{equation}\label{eq:con2a}
%{{\bf{A}}^T{\bf{G}}(t_i)}-(f(t_i)+z){{\bf{B}}^T{\bf{H}}(t_i)}\leq \theta,
%\end{equation}
%\begin{equation}\label{eq:con3a}
%{\bf{B}}^T{\bf{H}}(t_i)\geq\delta,
%\end{equation}
%\begin{equation}\label{eq:con4a}
%t_i\in I , ~i=1,\dots,N.
%\end{equation}
If in the optimal solution of (\ref{eq:obja})-(\ref{eq:con3a}) $\theta\leq 0$, problem~(\ref{eq:obj})-(\ref{eq:con3}) is feasible otherwise it is infeasible.

We use a bisection method for quasiconvex optimisation. This method is simple and reliable and can be applied to any quasiconvex function (see~\cite{SL}, section~4.2.5 for more information). Our bisection method relies on solving the  feasibility problem, that is, finding if set~(\ref{eq:con1})-(\ref{eq:con3}) has a feasible point, which can be done by solving a linear programming problem. The bisection method is given next, in Algorithm~\ref{alg:bisection}.

\begin{algorithm}
	\caption{Bisection algorithm for quasiconvex optimisation} \label{alg:bisection}
	\begin{algorithmic}
		\REQUIRE Parameters for the non-convex problem~\eqref{eq:obj}-\eqref{eq:con3}. \\  \hspace{40pt} Absolute precision for maximal deviation $\varepsilon$
		\ENSURE Maximal deviation $z$ (within $\varepsilon$ precision) \\  \hspace{36pt}
		Coefficients $\bf{A},\bf{B}$ of the optimal linear combinations of~\eqref{eq:problem}
		\STATE set $l \leftarrow 0$
		\STATE set $u$ to be maximal deviation for a polynomial approximation
		\STATE $z \leftarrow (u+l)/2$
		\WHILE{$u-l \leq \varepsilon$}
		\STATE Check feasibility using  problem~\eqref{eq:obja}-\eqref{eq:con3a} with $z$.
		 \IF{feasible solution exists}
		\STATE $u \leftarrow z$
		\ELSE
		\STATE $l \leftarrow z$
		\ENDIF
		\STATE update $z \leftarrow (u+l)/2$
		\ENDWHILE 
       	\STATE $\bf{A},\bf{B} \leftarrow$ solve problem~\eqref{eq:obja}-\eqref{eq:con3a} with $z$
       	\RETURN $z,\bf{A},\bf{B} $	
	\end{algorithmic}
\end{algorithm}

\subsection{Numerical examples}
We demonstrate a few numerical aspects of our algorithm using the following examples. The code of this section is implemented in Matlab and is available online\footnote{\url{https://github.com/nirsharon/rational_approx}} for full reproducibility. 

In the first example, we consider a nonsmooth (and non-Lipschitz) function with a sharp, abrupt change,
\begin{equation} \label{eqn:function_example1}
 f(x)=\sqrt{\abs{x-0.25}},  \quad x \in [-1,1] . 
\end{equation}
The discretisation is done using the set of Chebyshev nodes $\{ \cos{\pi\frac{2\ell-1}{2n}} \}_{\ell=1}^N$, with $N$ sufficiently large and a precision $\varepsilon=10^{-14}$ for the bisection of Algorithm~\ref{alg:bisection} (more on this parameter in the sequel). For the rational approximation, we select the total degrees $n=4$ and $m=4$, which yields a (4,4) type rational approximant. To illustrate the advantage of rational approximation over polynomial approximation, we compare our minimax with the polynomial minimax of total degree $n+m =8$, calculated by the Remez algorithm, see, e.g.,~\cite{pachon2009barycentric}. The result is presented in Figure~\ref{fig:compare_approx}, where Figure~\ref{subfig:function_approx} shows the function $f$ of~\eqref{eqn:function_example1} side by side with its minimax polynomial approximation of degree $8$ and the minimax rational approximation of type $(4,4)$. 

\begin{figure}[ht]
	\begin{center}
	\begin{subfigure}{.49\textwidth}
		\includegraphics[width=\textwidth]{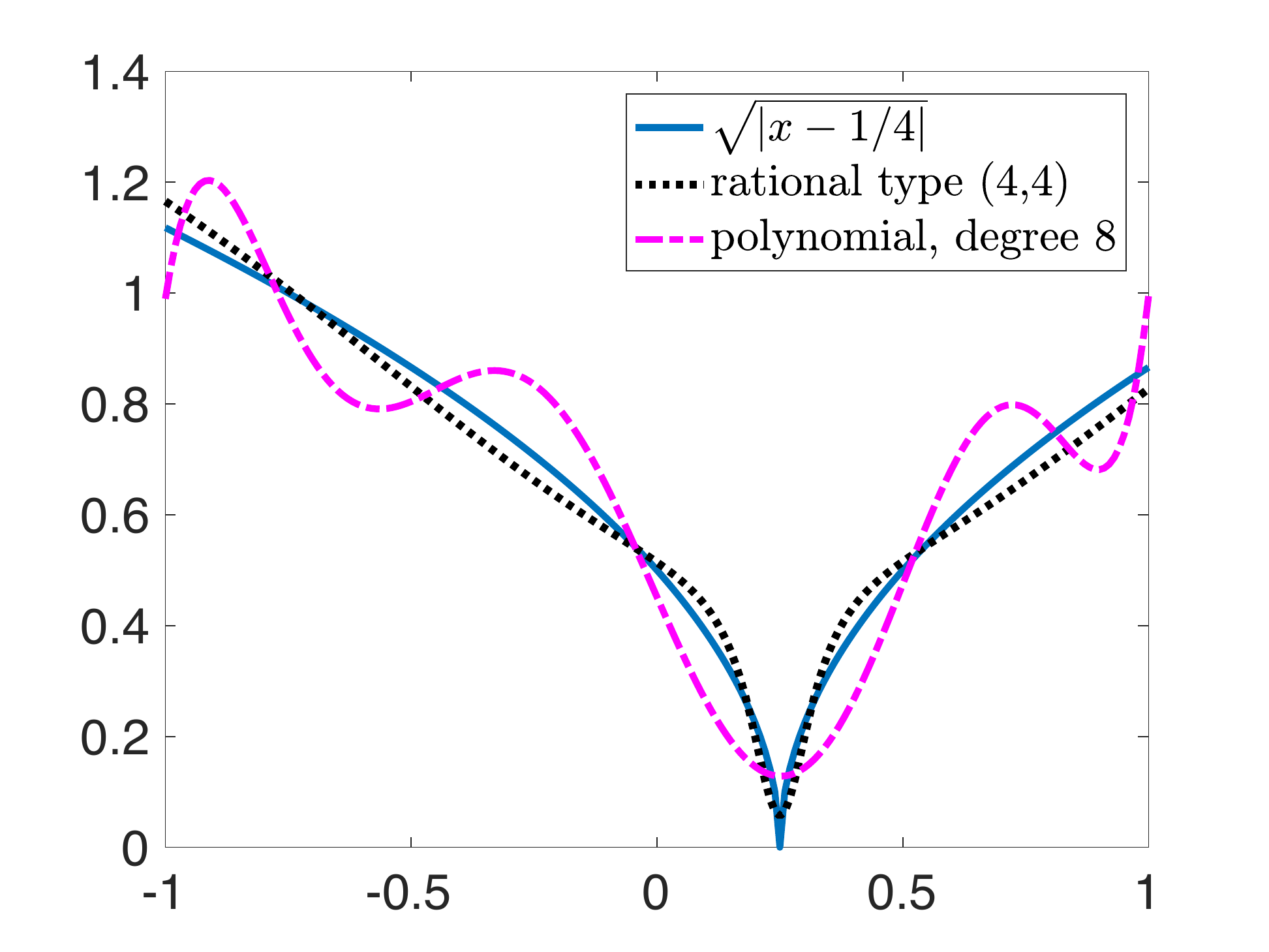}  
		\caption{The function and its approximations}
		\label{subfig:function_approx}
	\end{subfigure}
	\begin{subfigure}{.49\textwidth}
		\includegraphics[width=\textwidth]{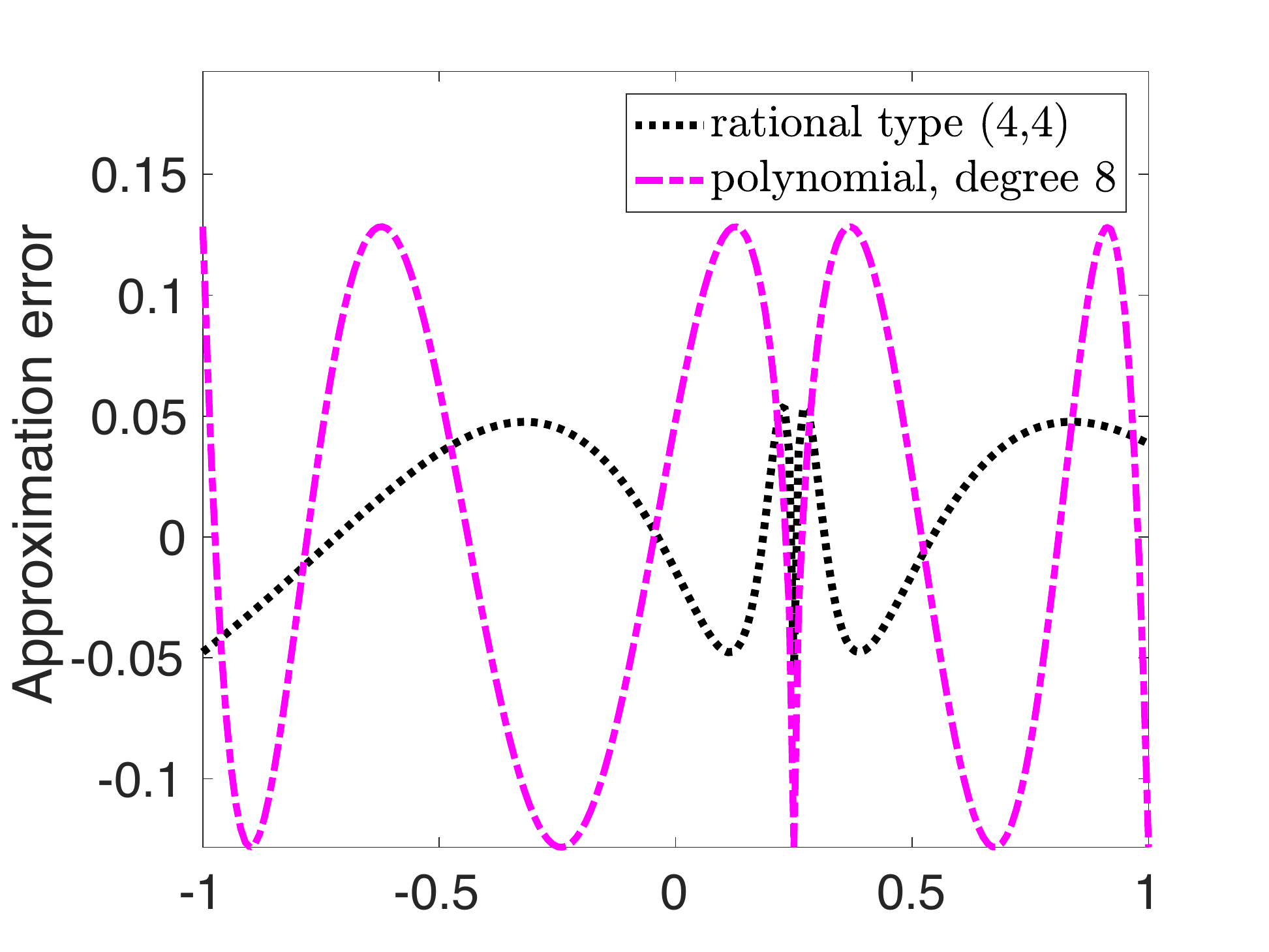}  
		\caption{Error curves}
		\label{subfig:error_curve}
	\end{subfigure}
	\caption{A comparison of best uniform approximations: rational versus polynomial. The polynomial is computed using the Remez algorithm. The rational minimax approximation is calculated via Algorithm~\ref{alg:bisection}.}
	\label{fig:compare_approx}
	\end{center}
\end{figure}

As seen in Figure~\ref{subfig:function_approx}, the rational function introduces a superior result over the polynomial one. In Figure~\ref{subfig:error_curve} we depict the error deviation of the two methods. The equioscillation property with a uniform magnitude of the error peaks (also for the polynomial minimax) are clearly seen. % and in particular, the eight peaks of the rational approximation \revns{suggest a defect value of $d=2$. However, if we decrease the tolerance  even more ($\varepsilon=10^{-14}$), two additional alternating points appear and therefore the optimality is verified by Theorem~\ref{thm:equioscillation_characterization}.
%}
% (one can also spot the $8+2$ peaks that are guaranteed by the polynomial version of Theorem~\ref{thm:equioscillation_characterization}).

In the second example, we study the role of the precision parameter of the bisection, that is $\varepsilon$ of Algorithm~\ref{alg:bisection}. This parameter determines the accuracy of the maximal deviation in~\eqref{eq:problem} that we obtain in our algorithm. A large $\varepsilon$ leads to a crude estimation of the maximal deviation, which in turn causes the optimisation problem~\eqref{eq:obja}-\eqref{eq:con3a} to generate a suboptimal solution to the uniform approximation problem~\eqref{eq:problem}. One possible validation for acquiring the optimal solution is via the equioscillation property, as described in Theorem~\ref{thm:equioscillation_characterization}, which will be demonstrate next. 

In this example, we use the same function of the previous one: $f$ of~\eqref{eqn:function_example1} but here the total degrees of our rational function are $m=n=3$. This choice leads to a defect free case ($d=0$) which means we expect to see $8 = m+n+2$ extreme, alternating values of the approximation error using the minimax. In Figure~\ref{fig:eps} we provide two error curves, generated by two different $\varepsilon$ values. In the first case, we set a relatively high value of $\varepsilon=0.1$, which results in the error curve of Figure~\ref{subfig:eps1}, where the error is not uniform and there are only $7$ extreme peaks. However, when we decrease $\varepsilon$, we derive a more accurate optimisation problem. Specifically, with $\varepsilon=10^{-5}$ we attain the error curve of Figure~\ref{subfig:eps2}, having $8$ error peaks that come with similar magnitude and alternating signs. This is a clear indication that the approximation is very close to be the minimax rational approximation.

\begin{figure}[ht]
	\begin{subfigure}{.49\textwidth}
		\centering
		\includegraphics[width=.95\textwidth]{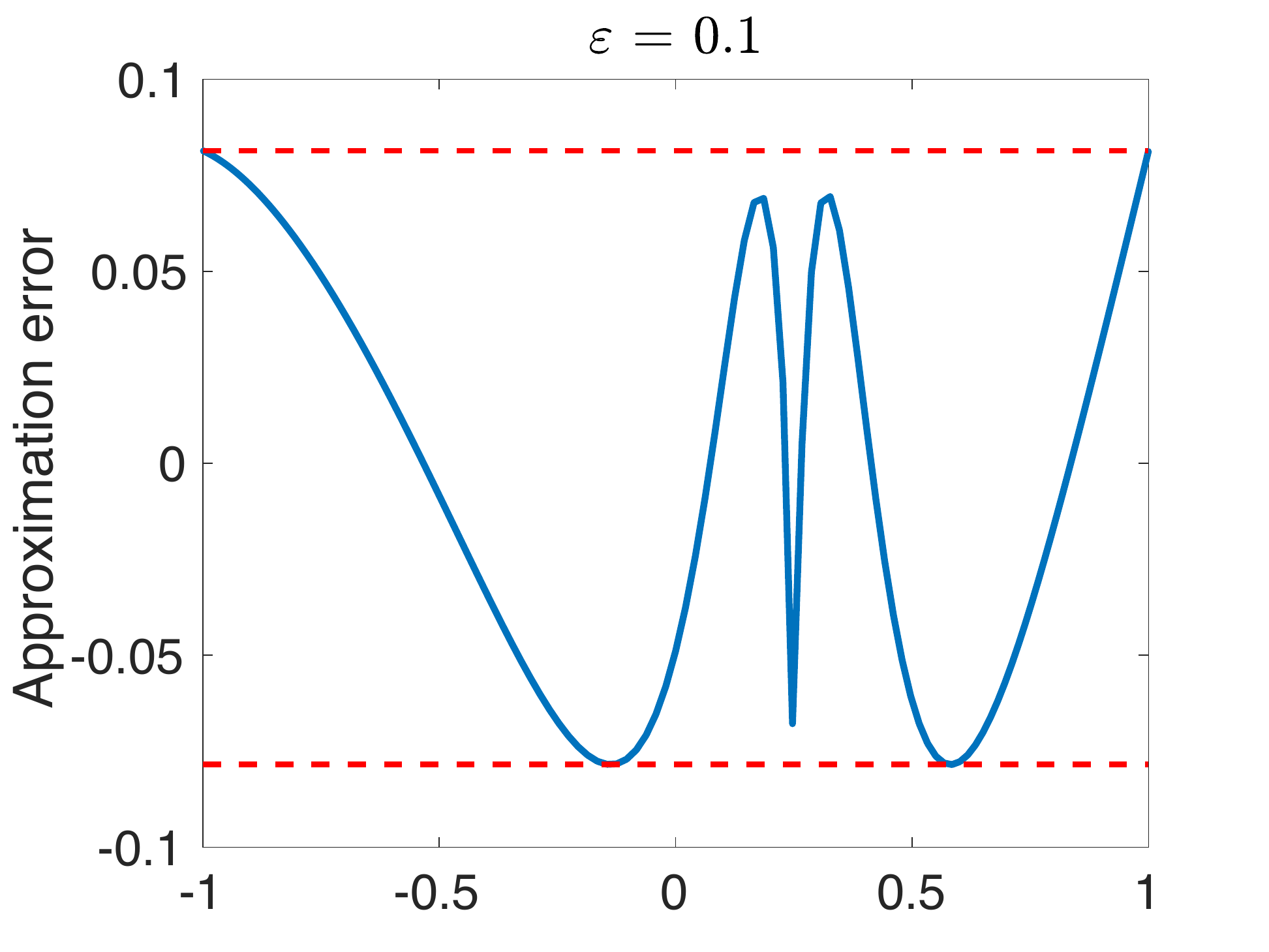}  
		\caption{}%A suboptimal solution with fewer oscillations and non-uniform error peaks}
		\label{subfig:eps1}
	\end{subfigure}
	\begin{subfigure}{.49\textwidth}
		\centering
		\includegraphics[width=.95\textwidth]{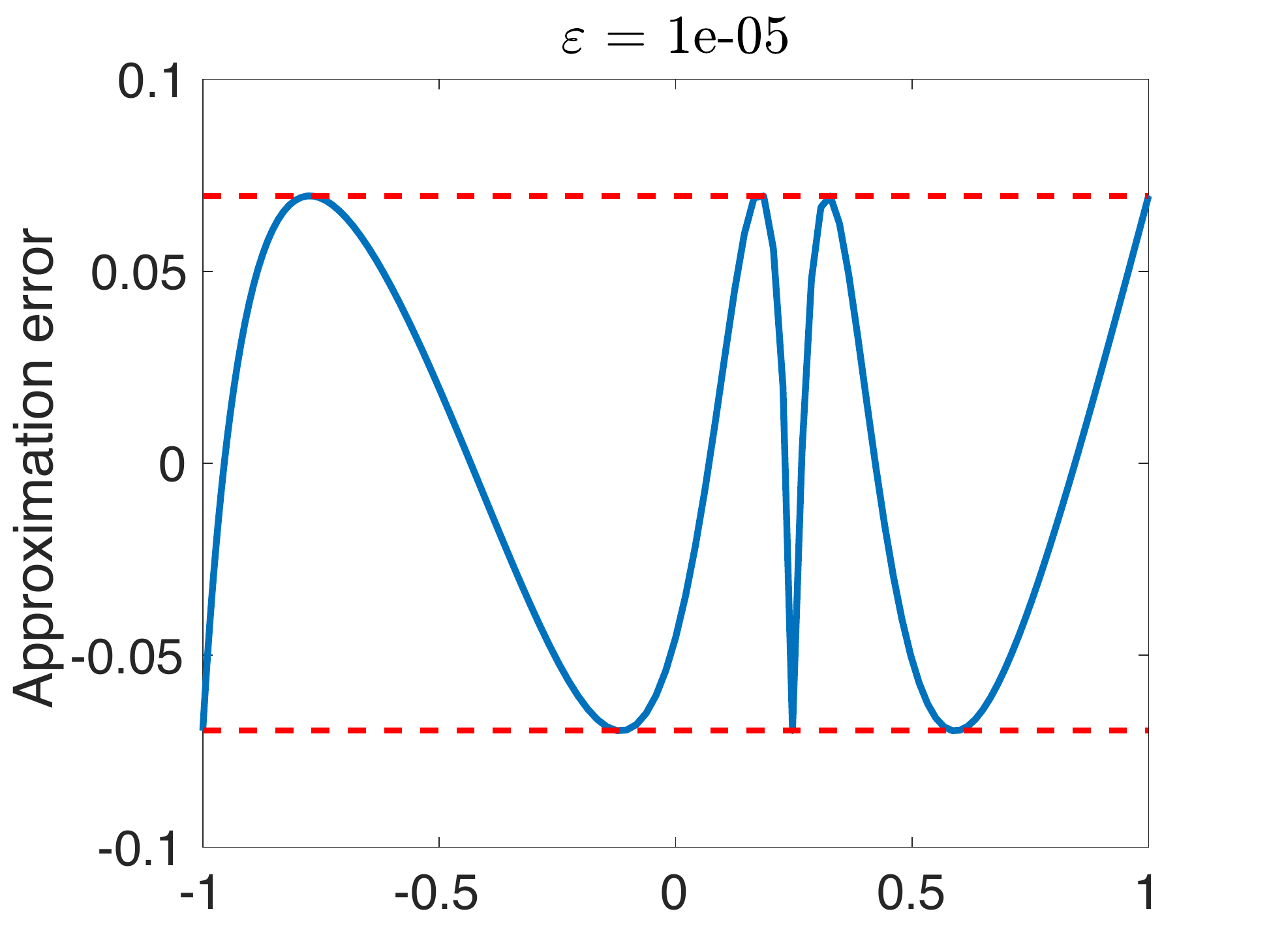}  
		\caption{}%\revns{A smaller $\varepsilon$ leads to an estimated minimax that satisfies }}
%{The optimal minimax}
		\label{subfig:eps2}
	\end{subfigure}
	\caption{The precision $\varepsilon$ of Algorithm~\ref{alg:bisection} and its influence on achieving the best uniform rational approximation: with smaller $\varepsilon$, the error curves tends to have more uniform error peaks with number of oscillations as described in Theorem~\ref{thm:equioscillation_characterization}}
	\label{fig:eps}
\end{figure}

\section{Application to deep learning}\label{sec:application}

\subsection{Data description and classification settings}\label{ssec:Data}

In this study, we use electroencephalogram (EEG) signals, which are recorded brain waves, from a publicly available dataset, collected by the epileptic center at the University of Bonn, Germany~\cite{andrzejak2001indications}. This dataset consists of five datasets  A, B, C, D, and E. Each set contains 100~signal segments recorded during 23.6~seconds with a sampling frequency of 173.61~Hz. Each signal has 4097~recordings. In this paper, we apply our approximation-based approach to develop an automatic procedure to distinguish between the first two sets (set A and B). Signal segments of sets A and B are collected from five healthy volunteers with eyes open and closed, respectively.

In our experiments, 75\% of each set is used as the corresponding training sets, while the remaining 25\% is the test sets. The reported classification accuracies correspond to the test sets. We evaluate the test accuracy obtained over raw data signals and compare them with the accuracy obtained over the parameters of the approximations.

As a classifier, we use deep learning tools from Python package scikit-learn (version 0.22.1)~\cite{scikit_learn}. We use up to three hidden layers, but since the classification results with one or two hidden layers are much better than the results with three hidden layers, we only report the former.

\subsection{Approximations}\label{ssec:Approximations}

In our experiments, we use two models.
\begin{enumerate}
\item[M1] Each data segment is approximated by the minimax rational function of type $(n,m)$, that is the solution of~(\ref{eq:problem}).
\item[M2] Each segment is approximated by a function \[ A\sin(\omega t+\tau),\]
where $A$ is the amplitude, constructed in the form of rational function. As before, the degree of the numerator  polynomial is~$n$ and the degree of the denominator polynomial is~$m$. Frequency~$\omega$ and  phase shift~$\tau$ are unknown.
\end{enumerate}

Model~M1 has been studied in Section~\ref{sec:formulation} and its application is straightforward, while Model~M2 requires more study. The corresponding optimisation problem is as follows:
\begin{equation}\label{eq:problemsin}
\min_{\substack{ {\bf A}, {\bf B} \\ \omega, \tau}}\sup_{t\in I}\left |f(t)-\frac{{\bf{A}}^T{\bf{G}}(t)}{{\bf{B}}^T{\bf{H}}(t)}\sin(\omega t+\tau)\right|,
\end{equation}
subject to
\begin{equation}\label{eq:positivitysin}
{\bf{B}}^T{\bf{H}}(t)>0,~t\in I.
\end{equation}
 This problem is not quasiconvex when $\omega$ and $\tau$ are unknowns, therefore we use the approach from~\cite{Zamir2015} to obtain $\omega$ and $\tau$: we choose a finite set of possible values for $\omega$ (natural numbers between $1$ and $15$) and $\tau$ (in our experiments  $0, \pi/4, \pi/2, 3\pi/4$) and run a brute force search in the form of double loops with fixed values for~$\omega$ and $\tau$. At each iteration of these loops, the problem is back to be quasiconvex --- the corresponding basis functions are obtained by multiplying monomials and a sine-function with known $\omega$ and $\tau$. 

Unlike the studies of K-complexes~\cite{Zamir2015, zamir2013optimization, Zamir2016}, there is no specific pattern to distinguish between these two classes. Figure~\ref{fig:class1} depicts the raw signal (the oscillatory blue) from the first class (Class~A) and its approximation (in red) for Model~M1  and Model~M2. Similarly, Figure~\ref{fig:class2} presents the raw signal (oscillatory blue) from the second class (Class~B) and its approximation (in red) for Model~M1 and Model~M2. It is clear from the pictures that Model~M2 is more accurate due to the additional sine component. 

\begin{figure}[ht]
	\begin{center}
		\begin{subfigure}{.49\textwidth}
			\includegraphics[width=\textwidth]{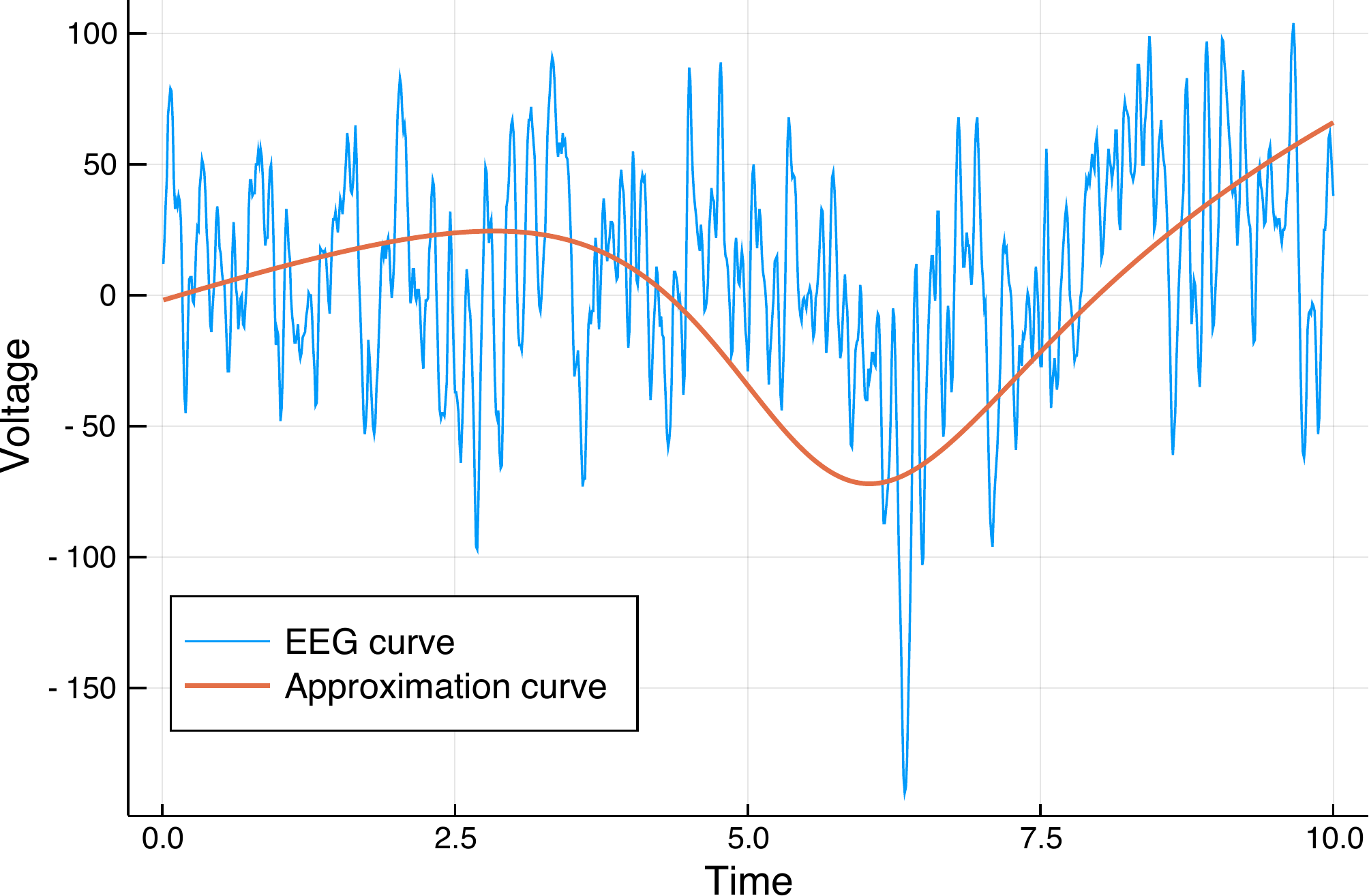} 
			\caption{Model~M1}
			\label{subfig:model1a}
		\end{subfigure}
		\begin{subfigure}{.49\textwidth}
			\includegraphics[width=\textwidth]{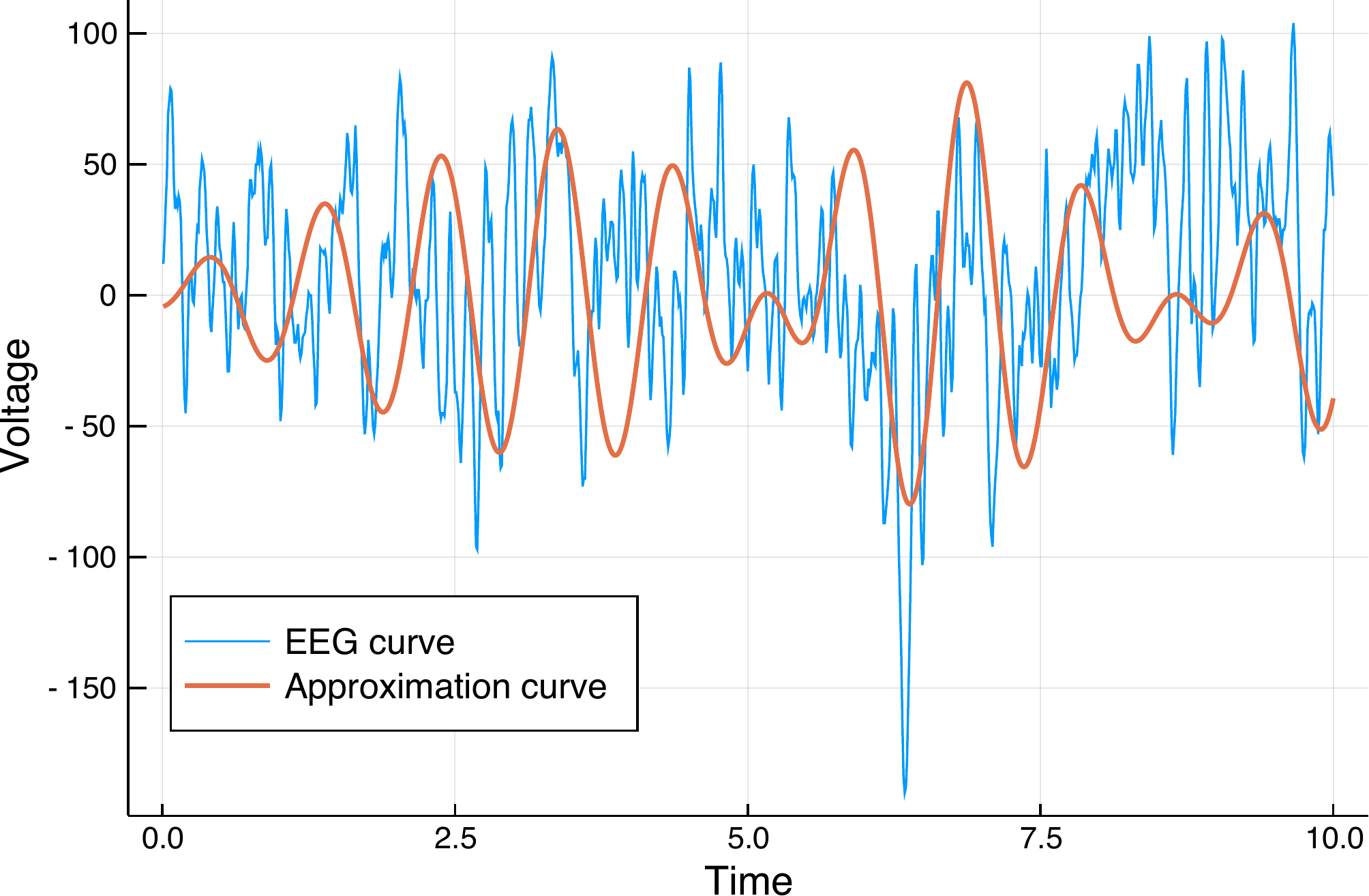}  
			\caption{Model~M2}
			\label{subfig:model2a}
		\end{subfigure}
		\caption{Signal from Class~A and its approximation}
		\label{fig:class1}
	\end{center}
\end{figure}

\begin{figure}[ht]
	\begin{center}
		\begin{subfigure}{.49\textwidth}
			\includegraphics[width=\textwidth]{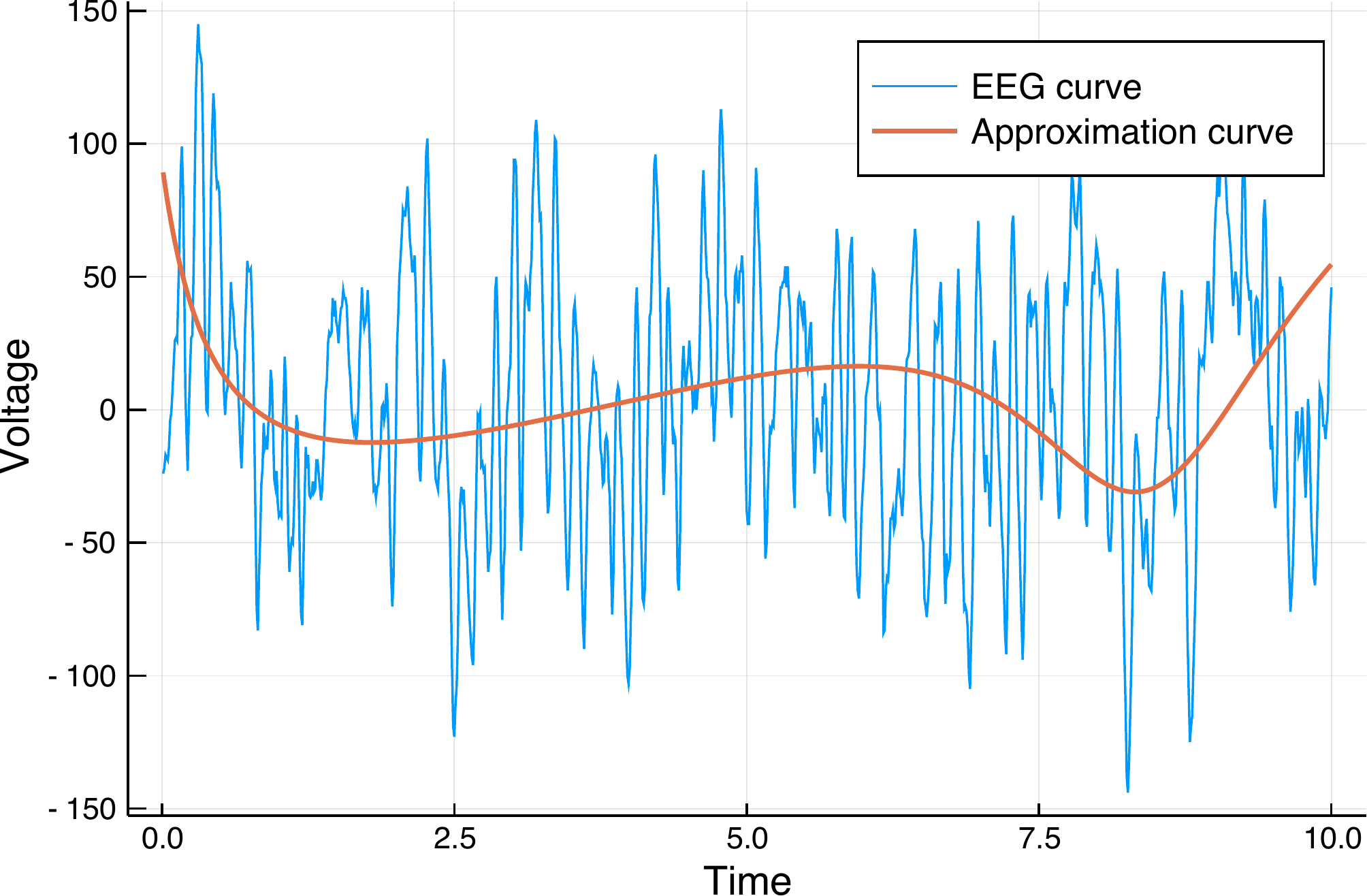} 
			\caption{Model~M1}
			\label{subfig:model1b}
		\end{subfigure}
		\begin{subfigure}{.49\textwidth}
			\includegraphics[width=\textwidth]{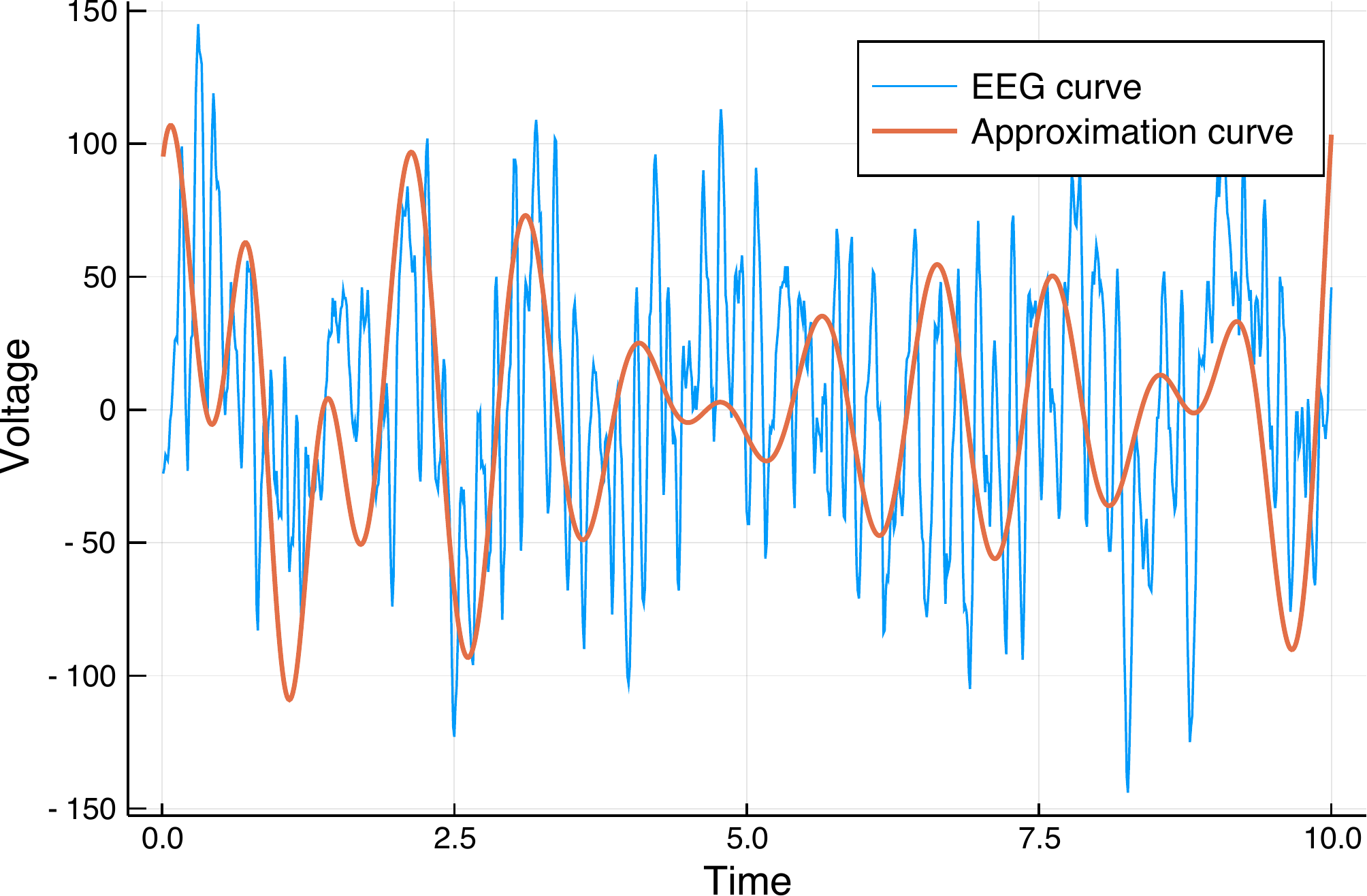}  
			\caption{Model~M2}
			\label{subfig:model2b}
		\end{subfigure}
		\caption{Signal from Class~B and its approximation}
		\label{fig:class2}
	\end{center}
\end{figure}

\subsection{Results}\label{ssec:Results}

In our experiments, where possible,  we use the default parameters from package scikit-learn (0.22.1)~\cite{scikit_learn}. In particular, the activation function is ReLU (unless specified otherwise). The default number of nodes within each hidden layer is~100, but the best results are achieved with fewer nodes. The degree of the numerator and denominator polynomials are $n=3$ and $m=1$. This is where  the highest classification results are achieved. For normalisation purposes, we fixed the value for the zero degree coefficient in the denominator at~$1$. Therefore, the total number of output parameters (features) in Model~M1 is 
$$(n+1)+(m+1)-1=5.$$
These features include the total number of the parameters for both polynomials minus 1 (since one of the coefficients is fixed at~1). In the case of Model~M2 there is an extra parameter that represents the frequency ($\omega$).

\begin{table}
\centering
\caption{One hidden layer}
\label{tab:1h}
\begin{tabular}{|cc|cc|cc|}
\hline
\multicolumn{2}{|c|}{Original (raw data)}&\multicolumn{2}{|c|}{	Model M1}   &\multicolumn{2}{|c|}{Model 2}\\		
\multicolumn{2}{|c|}{4097 features}         &\multicolumn{2}{|c|}{	5 features}&\multicolumn{2}{|c|}{6 features}\\	
\hline													
	Number of nodes 	&Accuracy	&	Nodes & 	Accuracy		&	Nodes  &	Accuracy\\
	\hline
	100 (default)&	55\%	&				100 &	75\%	&100 &75\%\\
	1                     &	60\%	&						1&	40\%	&		1&	40\%\\
	2	                    &35\%			&			2      &	\hl{\textbf{90\%}}	& 		2&	60\%\\
	3	&45\%&							3&	60\%	&		3&	45\%\\
	4	&\hl{\textbf{65\%}}	&						4	&60\%			&4	&65\%\\
	5	&45\%		&					5	&35\%			&5	&80\%\\
	6	&45\%			&				6	&55\%			&6	&70\%\\
	7	&55\%					&		7	&70\%		&	7	&65\%\\
	8	&55\%						&	8	&65\%			&8	&\hl{\textbf{85\%}}\\
	9	&50\%		&					9	&70\%		&	9	&65\%\\
	10	&55\%			&				10&	45\%	&		10&	45\%\\
	\hline
	2731 (2/3 of inputs)&	60\%&3&	60\%&	4 &	65\%\\
	8194 (2*inputs+1)	&50\%&	11&	75\%&		13&	70\%\\
	\hline
\end{tabular}
\end{table}													

 Tables~\ref{tab:1h} and \ref{tab:2h} represent the classification results for one and two hidden layers, respectively. In the tables we introduce the results for different number of nodes within each level. The last section of Table~\ref{tab:1h} represents two widely used rules for choosing the number of nodes for one hidden layer: 
 \[ 2/3k~{\rm{and}}~2k+1, \]
where $k$ represents the size of the inputs (that is, the number of features) for deep learning.     
		
\begin{table}			
\centering
\caption{Two hidden layers}
\label{tab:2h}				
\begin{tabular}{|c|c|c|c|}
\hline
	Number of nodes&{Original (raw data)}&{	Model 1}   &{Model 2}\\		
in each layer&{4097 features}         &{	5 features}&{6 features}\\	
\hline													
	100, 100&	65\%	&				60\%	& 45\%\\
	2, 1	&60\%	&							60\%	&		60\%\\
	3, 2&	30\%	&							40\%&				60\%\\
	4, 3&	60\%	&							45\%	&			75\%\\
	5, 1	&60\%	&							60\%	&			80\%\\
	5, 4&	45\%	&						75\%	&		50\%\\
	6, 5&	70\%	&							40\%	&		80\%\\
	7, 6&	50\%	&							60\%	&			40\%\\
	8, 2&	60\%	&						\hl{\textbf{90\%}}	&			40\%\\
	8, 7&	50\%	&							55\%	&			50\%\\
	8, 8&	60\%	&							25\%	&			80\%\\
	9, 8&	45\%	&							55\%	&	80\%\\
	10 , 9	&45\%&								75\%&			\hl{\textbf{85\%}}\\
	10, 10&	65\%&							80\%&				80\%\\
	11,11	&55\%&								80\%	&			75\%\\
	12, 11	&45\%&								85\%	&			60\%\\
	12, 12&	50\%&							80\%	&		50\%\\
	13, 12&	\hl{\textbf{75\%}}&							85\%	&			55\%\\
	16, 15	&45\%&								80\%&				40\%\\
	25, 25&	50\%&								85\%&				55\%\\
	\hline
\end{tabular}
	\end{table}		

According to Tables~\ref{tab:1h} and \ref{tab:2h}, the classification accuracy can be significantly improved by applying our rational approximation together with the deep learning techniques. Specifically, in the case of a single hidden layer, the accuracy was improved from 65\% (raw data) to 85\% using Model~M2 with 8 nodes and up to 90\% with Model~M1 and 2 nodes. This accuracy can be further improved to 95\% when the activation function is identity (non-default). These cases are highlighted in Table~\ref{tab:1h}.

In the case of two hidden layers, the accuracy was also improved from 65\% (raw data) to 85\% with Model~M2 with 9 or 10 nodes and to 90\% with Model~M1 having 8 or 2 nodes, see Table~\ref{tab:2h}. The accuracy can be further improved also in this case to 95\% when the activation function is $\tanh$ (non-default) and the number of nodes is 12 and 11.

Overall, Model~M1 (which is a simpler model) performs better from both points of view: classification accuracy and computational time. Similar observations are made in~\cite{Zamir2015}, where simpler models are achieving  better classification accuracy than more complex models.
 
 \section{Conclusions and further research directions}\label{sec:conclusions}
 
In this paper, we propose a simple and efficient approach for approximating continuous functions by ratios of linear combinations of basis functions, whose linear coefficients are the decision variables. Our approach exploits the fact that the corresponding optimisation problems are quasiconvex. In all our numerical experiments, the obtained solutions satisfy necessary and sufficient optimality conditions.

We demonstrated that the proposed approach is suitable for approximating nonsmooth and non-Lipschitz functions and also very effective in application to biomedical signal processing and for improving deep learning-based classifiers. 

In the future, we are planning to improve the computational efficiency of the method by considering different basis functions and also by adopting other newly developed optimisation techniques that are exceptionally efficient for quasiconvex functions. Another exciting research direction is to investigate possibilities for overcoming computational difficulties causing by a non-zero defect.    
 
%\section*{References}
\bibliographystyle{plainnat}

%\bibliographystyle{plainnat}

%\section*{\refname}
%\bibliography{mybib}

\begin{thebibliography}{41}
%\providecommand{\natexlab}[1]{#1}
%\providecommand{\url}[1]{\texttt{#1}}
%\expandafter\ifx\csname urlstyle\endcsname\relax
%  \providecommand{\doi}[1]{doi: #1}\else
%  \providecommand{\doi}{doi: \begingroup \urlstyle{rm}\Url}\fi

\bibitem{Achieser1965}
Achieser.
\newblock \emph{Theory of Approximation}.
\newblock Frederick Ungar, New York, 1965.

\bibitem{andrzejak2001indications}
Ralph~G Andrzejak, Klaus Lehnertz, Florian Mormann, Christoph Rieke, Peter
  David, and Christian~E Elger.
\newblock Indications of nonlinear deterministic and finite-dimensional
  structures in time series of brain electrical activity: Dependence on
  recording region and brain state.
\newblock \emph{Physical Review E}, 64\penalty0 (6):\penalty0 061907, 2001.

\bibitem{Boehm1964}
Boehm.
\newblock Functions whose best rational chebyshev approximation are
  polynomials.
\newblock \emph{Numer. Math.}, pages 235--242, 1964.

\bibitem{FreeKnotsOpenProblem96}
P.~Borwein, I.~Daubechies, V.~Totik, and G.~N\"urnberger.
\newblock Bivariate segment approximation and free knot splines: Research
  problems 96-4.
\newblock \emph{Constructive Approximation}, 12\penalty0 (4):\penalty0
  555--558, 1996.

\bibitem{SL}
S.~Boyd and L.~Vandenberghe.
\newblock \emph{Convex Optimization}.
\newblock Cambridge University Press, New York, NY, USA, 2010.

\bibitem{chebyshev}
P.L Chebyshev.
\newblock The theory of mechanisms known as parallelograms.
\newblock In \emph{Selected Works}, pages 611--648. Publishing Hours of the
  USSR Academy of Sciences, Moscow, 1955.
\newblock (In Russian).

\bibitem{JPCrouzeix1980quasi}
J.~P. Crouzeix.
\newblock Conditions for convexity of quasiconvex functions.
\newblock \emph{Mathematics of Operations Research}, 5\penalty0 (1):\penalty0
  120--125, 1980.

\bibitem{SukUgonCrouzeix2019}
J.-P. Crouzeix, N.~Sukhorukova, and J.~Ugon.
\newblock Finite alternation theorems and a constructive approach to piecewise
  polynomial approximation in chebyshev norm.
\newblock \emph{Set-Valued and Variational Analysis, Published online}, 2020.

\bibitem{DaniilidisHadjisavvasMartinezLegas2002}
Aris Daniilidis, Nicolas Hadjisavvas, and Juan-Enrique Martinez-Legaz.
\newblock An appropriate subdifferential for quasiconvex functions.
\newblock \emph{SIAM Journal on Optimization}, 12:\penalty0 407--420, 2002.

\bibitem{dutta2005abstract}
J~Dutta and AM~Rubinov.
\newblock Abstract convexity.
\newblock \emph{Handbook of generalized convexity and generalized
  monotonicity}, 76:\penalty0 293--333, 2005.

\bibitem{hokanson2018least}
Jeffrey~M Hokanson and Caleb~C Magruder.
\newblock Least squares rational approximation.
\newblock \emph{arXiv preprint arXiv:1811.12590}, 2018.

\bibitem{lorentz1996constructive}
George~G Lorentz, Manfred von Golitschek, and Yuly Makovoz.
\newblock \emph{Constructive approximation: advanced problems}, volume 304.
\newblock Springer, 1996.

\bibitem{Meinardus1967rational}
G.~Meinardus.
\newblock \emph{Approximation of Functions: Theory and Numerical Methods}.
\newblock Springer-Verlag, Berlin, 1967.

\bibitem{Meinardus1989}
G.~Meinardus, G.~N\"{u}rnberger, M.~Sommer, and H.~Strauss.
\newblock Algorithms for piecewise polynomials and splines with free knots.
\newblock \emph{Mathematics of Computation}, 53:\penalty0 235--247, 1989.

\bibitem{Mulansky92}
B.~Mulansky.
\newblock Chebyshev approximation by spline functions with free knots.
\newblock \emph{IMA Journal of Numerical Analysis}, 12:\penalty0 95--105, 1992.

\bibitem{nakatsukasa2016computing}
Yuji Nakatsukasa and Roland~W Freund.
\newblock Computing fundamental matrix decompositions accurately via the matrix
  sign function in two iterations: The power of zolotarev's functions.
\newblock \emph{SIAM Review}, 58\penalty0 (3):\penalty0 461--493, 2016.

\bibitem{Trefethen2018}
Yuji Nakatsukasa, Olivier Sete, and Lloyd~N. Trefethen.
\newblock The aaa algorithm for rational approximation.
\newblock \emph{SIAM Journal on Scientific Computing}, 40\penalty0
  (3):\penalty0 A1494--A1522, 2018.

\bibitem{NR}
R.~B. Northrop.
\newblock \emph{Signals and Systems Analysis in Biomedical Engineering}.
\newblock CRC Press, Boca Raton, Florida, USA, 2003.

\bibitem{NUG}
G.~N{\"u}rnberger.
\newblock \emph{Approximation by Spline Functions}.
\newblock Springer-Verlag, Berlin Heidelberg, 1989.

\bibitem{pachon2009barycentric}
Ricardo Pach{\'o}n and Lloyd~N Trefethen.
\newblock Barycentric-remez algorithms for best polynomial approximation in the
  chebfun system.
\newblock \emph{BIT Numerical Mathematics}, 49\penalty0 (4):\penalty0 721,
  2009.

\bibitem{scikit_learn}
F.~Pedregosa, G.~Varoquaux, A.~Gramfort, V.~Michel, B.~Thirion, O.~Grisel,
  M.~Blondel, P.~Prettenhofer, R.~Weiss, V.~Dubourg, J.~Vanderplas, A.~Passos,
  D.~Cournapeau, M.~Brucher, M.~Perrot, and E.~Duchesnay.
\newblock Scikit-learn: Machine learning in {P}ython.
\newblock \emph{Journal of Machine Learning Research}, 12:\penalty0 2825--2830,
  2011.

\bibitem{PetrushevPopov1987}
P.~Petrushev and V.~Popov.
\newblock \emph{Rational approximation of real functions}.
\newblock Cambridge University Press, 1987.

\bibitem{Ralston1965Reme}
Anthony Ralston.
\newblock Rational chebyshev approximation by remes' algorithms.
\newblock \emph{Numer. Math.}, 7\penalty0 (4):\penalty0 322--330, August 1965.
\newblock ISSN 0029-599X.

\bibitem{ralston1960mathematical}
Anthony Ralston and Herbert~S Wilf.
\newblock \emph{Mathematical methods for digital computers}.
\newblock New York: Wiley, 1960.

\bibitem{Remez34}
E.~Remez.
\newblock Sur la d\'{e}termination des polyn\u{o}mes d'approximation de
  degr\'{e} donne\'{e}.
\newblock \emph{Comm. Soc. Math.}, 10, 1934.

\bibitem{Remez69}
E.~Remez.
\newblock \emph{Foundations of numerical methods of Chebyshev approximation}.
\newblock Naukova Dumka, Kiev, 1969.

\bibitem{remez57}
E.Ya Remez.
\newblock General computational methods of chebyshev approximation.
\newblock \emph{Atomic Energy Translation}, 4491, 1957.

\bibitem{Rivlin1962}
TJ~Rivlin.
\newblock Polynomials of best uniform approximation to certain rational
  functions.
\newblock \emph{Numerische Mathematik}, 4\penalty0 (1):\penalty0 345--349,
  1962.

\bibitem{rubinov2013abstract}
Alexander~M Rubinov.
\newblock \emph{Abstract convexity and global optimization}, volume~44.
\newblock Springer Science \& Business Media, 2013.

\bibitem{Schumaker1968}
L.~Schumaker.
\newblock Uniform approximation by chebyshev spline functions. {II}: free
  knots.
\newblock \emph{SIAM Journal of Numerical Analysis}, 5:\penalty0 647--656,
  1968.

\bibitem{SharonShkolnisky}
N.~Sharon and Y.~Shkolnisky.
\newblock Evaluating non-analytic functions of matrices.
\newblock \emph{Journal of Mathematical Analysis and Applications},
  462:\penalty0 613--636, 2018.

\bibitem{Filip2018}
Filip Silviu-Ioan, Yuji Nakatsukasa, Lloyd~N. Trefethen, and Bernhard
  Beckermann.
\newblock Rational minimax approximation via adaptive barycentric
  representations.
\newblock \emph{Journal on Scientific Computing}, 40\penalty0 (4):\penalty0
  A2427--A2455, 2018.

\bibitem{sion1958}
Maurice Sion.
\newblock On general minimax theorems.
\newblock \emph{Pacific J. Math.}, 8\penalty0 (1):\penalty0 171--176, 1958.

\bibitem{sukhorukovaoptimalityfixed}
Nadezda Sukhorukova.
\newblock Uniform approximation by the highest defect continuous polynomial
  splines: necessary and sufficient optimality conditions and their
  generalisations.
\newblock \emph{Journal of Optimization Theory and Applications}, 147\penalty0
  (2):\penalty0 378--394, 2010.

\bibitem{su10}
Nadezda Sukhorukova and Julien Ugon.
\newblock Characterization theorem for best linear spline approximation with
  free knots.
\newblock \emph{Dyn. Contin. Discrete Impuls. Syst}, 17\penalty0 (5):\penalty0
  687--708, 2010.

\bibitem{SukUgonTrans2017}
Nadezda Sukhorukova and Julien Ugon.
\newblock Characterisation theorem for best polynomial spline approximation
  with free knots.
\newblock \emph{Trans. Amer. Math. Soc.}, pages 6389--6405, 2017.

\bibitem{Trefethen2012}
Lloyd~N. Trefethen.
\newblock \emph{Approximation Theory and Approximation Practice (Other Titles
  in Applied Mathematics)}.
\newblock Society for Industrial and Applied Mathematics, USA, 2012.
\newblock ISBN 1611972396.

\bibitem{Zamir2016sa}
Z.~Roshan Zamir.
\newblock Detection of epileptic seizure in {EEG} signals using linear least
  squares preprocessing.
\newblock \emph{Computer Methods and Programs in Biomedicine}, 133:\penalty0
  95--109, 2016.

\bibitem{Zamir2016}
Z.~Roshan Zamir and N.~Sukhorukova.
\newblock Linear least squares problems involving fixed knots polynomial
  splines and their singularity study.
\newblock \emph{Applied Mathematics and Computation}, 282:\penalty0 204 -- 215,
  2016.

\bibitem{Zamir2015}
Z.~Roshan Zamir, N.~Sukhorukova, H.~Amiel, A.~Ugon, and C.~Philippe.
\newblock Convex optimisation-based methods for k-complex detection.
\newblock \emph{Appl. Math. Comput.}, 268\penalty0 (C):\penalty0 947--956,
  October 2015.
\newblock ISSN 0096-3003.

\bibitem{zamir2013optimization}
Zahra~Roshan Zamir, Nadezda Sukhorukova, Helene Amiel, Adrien Ugon, and Carole
  Philippe.
\newblock Optimization-based features extraction for k-complex detection.
\newblock \emph{Anziam Journal}, 55:\penalty0 384--398, 2013.

\end{thebibliography}

\end{document}